\newtheorem{theorem}{Theorem}[section]
\newtheorem{lemma}[theorem]{Lemma}
\newtheorem{proposition}[theorem]{Proposition}
\theoremstyle{remark}
\newtheorem*{remark}{Remark}
\numberwithin{equation}{section}
\newcommand{\NN}{\mathbb N}                             
\newcommand{\ZZ}{\mathbb Z}                             
\newcommand{\RR}{\mathbb R}                             
\newcommand{\CC}{\mathbb C}                             
\newcommand{\ra}{\rightarrow}                           
\newcommand{\abs}[1]{\left| #1 \right|}                 
\newcommand{\floor}[1]{\left\lfloor #1 \right\rfloor}   
\newcommand{\p}[1]{\left( #1 \right)}                   
\DeclareMathOperator{\e}{\mathrm{e}}                    
\title{Normality of the Thue--Morse sequence along Piatetski-Shapiro
sequences}
\author{Lukas~Spiegelhofer\medskip\\
Institute of Discrete Mathematics and Geometry\\
Vienna University of Technology, Vienna, Austria}
\begin{document}
\maketitle
\begin{abstract}
We prove that for $1<c<4/3$ the subsequence of the Thue--Morse sequence
$\mathbf t$ indexed by $\lfloor n^c\rfloor$ defines a normal sequence, that is,
each finite sequence $(\varepsilon_0,\ldots,\varepsilon_{T-1})\in \{0,1\}^T$
occurs as a contiguous subsequence of the sequence
$n\mapsto \mathbf t\left(\lfloor n^c\rfloor\right)$ with asymptotic frequency
$2^{-T}$.
\end{abstract}
\section{Acknowledgement}
This is a pre-copyedited, author-produced version of an article accepted for publication in The Quarterly Journal of Mathematics following peer review.
The version of record (Q. J. Math (2015) 66 (4): 1127-1138) is available online at:  https://doi.org/10.1093/qmath/hav029.
\section{Introduction}
The Thue--Morse sequence $\mathbf t$ has a rich history and has been studied
from various viewpoints, see for example the article~\cite{AS99} by Allouche
and Shallit or the article~\cite{M2001} by Mauduit. One approach to this
sequence is to view it as a $2$-automatic sequence (indeed it is one of the
simplest such sequences), which means that it is the output of a finite
automaton which is fed by the binary representations of $0,1,2,\ldots$. The
sequence $\mathbf t$ encodes the number of $1$'s in the binary representation
of a nonnegative integer $n$, which we write $s(n)$, reduced modulo $2$.
Alternatively, it can be described as the unique fixed point of the
substitution $0\mapsto 01$, $1\mapsto 10$ starting with the symbol $0$,
yielding \[\mathbf t=01101001100101101001011001101001\ldots\]

Since $\mathbf t$ is an automatic sequence --- we refer the reader to the
book~\cite{AS2003} by Allouche and Shallit for a comprehensive treatment of
automatic sequences --- its subword complexity is sublinear. In other words,
there exists a constant $C$ such that for each positive integer $k$ the
sequence $\mathbf t$ has at most $Ck$ different subwords (contiguous finite
subsequences) of length $k$.
In particular, since there are $2^k$ possible subwords of length $k$, the
sequence $\mathbf t$ is not normal. Moreover, $\mathbf t$ is an example of a
uniformly recurrent sequence, that is, for each subword $w$ appearing in
$\mathbf t$ there is a $k$ such that each subword of $\mathbf t$ of length $k$
contains $w$.

Despite these properties indicating the ``simplicity'' of the Thue--Morse
sequence, Drmota, Mauduit and Rivat~\cite{DMR2014} could recently prove that
the subsequence of $\mathbf t$ indexed by the squares is normal. That is, they
proved that each finite sequence $(\varepsilon_0,\ldots,\varepsilon_{T-1})$ in
$\{0,1\}$ occurs as a subword of the sequence $n\mapsto \mathbf t(n^2)$ with
asymptotic density $2^{-T}$. This result is remarkable not only for the reason
that we obtain a normal sequence by applying a simple operation to a
readily defined sequence, but also for the reason that this normal sequence
can be constructed \emph{bit-wise} via a simple algorithm --- which consists of
counting, modulo two, the $1$'s in the binary expansion of $n^2$.

In this work, we show that such a normality result can also be achieved for
certain subsequences $n\mapsto \mathbf t\p{\floor{f(n)}}$, where
$f:\NN\ra\RR^+$ grows more slowly than the sequence of squares. In particular,
we are interested in Piatetski-Shapiro sequences $n\mapsto \floor{n^c}$ with
$1<c<2$. The study of the behavior of digital functions (such as the
Thue--Morse sequence, for example) on $\floor{n^c}$ was initiated by the
article~\cite{MR95} by Mauduit and Rivat. They proved that subsequences
$n\mapsto \varphi\p{\floor{n^c}}$ of $q$-multiplicative functions $\varphi$
with values in $\{z\in\CC:\abs{z}=1\}$ behave as expected, given that
$1<c<4/3$, a bound that they improved to $1<c<7/5$ in~\cite{MR2005}. For a
definition of the term $q$-multiplicative function we refer to the cited
articles~\cite{MR95,MR2005}, we only note that the Thue--Morse sequence in the
form $n\mapsto (-1)^{s(n)}$ is such a function. The author~\cite{S2014}
replaced the bound $7/5=1.4$ by $1.42$ for the special case that
$\varphi=\mathbf t$.

Deshouillers, Drmota and Morgenbesser~\cite{DDM2012} studied the behavior of
automatic sequences $\mathbf u$ on $\floor{n^c}$, establishing the result that
if the density of each letter $a$ in the sequence $\mathbf u$ exists, then the
same is true for subsequence $n\mapsto \mathbf u\p{\floor{n^c}}$ and the
corresponding densities are identical, given that $c<7/5$. Moreover, they
considered consecutive terms in subsequences of the Thue--Morse sequence,
proving that for $1<c<10/9$ the density of each block
$(\varepsilon_0,\varepsilon_1)\in \{0,1\}^2$ of length two in the sequence
$n\mapsto \mathbf t\p{\floor{n^c}}$ equals $1/4$. An inspection of the method
used for obtaining this latter result shows that an analogous result can be
proved for arbitrary block lengths greater than two --- however, the interval
of admissible values for $c$ shrinks as the block length grows. In particular,
this method is not sufficient to prove the normality of Piatetski-Shapiro
subsequences of $\mathbf t$.

In the present article, we prove that for $1<c<4/3$ the subsequence of
$\mathbf t$ indexed by $\floor{n^c}$ is indeed a normal sequence. More
generally, our method allows us to conclude that the Thue--Morse sequence is
normal along $\floor{f(n)}$, where the second derivative of $f$ satisfies
some restrictions on the growth rate. Examples of such functions include
$\floor{n^c(\log n)^\eta}$ for $1<c<4/3$ and $\eta\in\RR$, or certain functions
growing more slowly than $n^{1+\varepsilon}$ for all $\varepsilon>0$.

Throughout this paper, we use the $1$-periodic functions $\e(x)=\exp(2\pi i x)$
and $\{x\}=x-\floor{x}$. Constants implied by the symbols $\ll$, $\asymp$ and
$O$ are absolute.
\section{The main result}
For the sake of better readability, we state the main theorem only for the
function $\lfloor n^c\rfloor$. The proof, however, deals with the more general
case announced in the introduction.
\begin{theorem}
Let $1<c<4/3$. The sequence
\[
n\mapsto \mathbf t\p{\floor{n^c}}
\]
is normal.
\end{theorem}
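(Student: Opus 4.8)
The plan is to convert the combinatorial notion of normality into the vanishing of a family of Thue--Morse correlation sums, and then to estimate these by the Piatetski-Shapiro exponential-sum machinery of Mauduit--Rivat combined with the multiple-correlation analysis of Drmota--Mauduit--Rivat. Fix a block $(\varepsilon_0,\dots,\varepsilon_{T-1})\in\{0,1\}^T$ and set $f(m)=(-1)^{s(m)}=1-2\mathbf t(m)$. Since $\mathbf 1\!\left[\mathbf t(m)=\varepsilon\right]=\tfrac12\p{1+(-1)^\varepsilon f(m)}$, the number of $n\le N$ for which $\mathbf t(\floor{(n+j)^c})=\varepsilon_j$ holds for all $0\le j<T$ equals
\[
2^{-T}\sum_{n\le N}\prod_{0\le j<T}\p{1+(-1)^{\varepsilon_j}f\p{\floor{(n+j)^c}}}
=2^{-T}N+2^{-T}\sum_{\emptyset\ne S}\p{\prod_{j\in S}(-1)^{\varepsilon_j}}C_S(N)+O(1),
\]
where $S$ ranges over the nonempty subsets of $\{0,\dots,T-1\}$ and $C_S(N)=\sum_{n\le N}\prod_{j\in S}f(\floor{(n+j)^c})$. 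Thus the theorem reduces to showing $C_S(N)=o(N)$ for every nonempty $S$. I stress that the arguments $\floor{(n+j)^c}$, $j\in S$, are \emph{distinct} integers of size $\asymp N^c$ separated by gaps $\asymp n^{c-1}\to\infty$; so $C_S$ records the Thue--Morse sequence read along the spread-out Piatetski-Shapiro points, not along an interval of consecutive integers.

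To expose cancellation in $C_S(N)$, I would first replace $f$ by a function of boundedly many binary digits. For a window length $\lambda$ let $f_\lambda(m)=(-1)^{s_\lambda(m)}$, where $s_\lambda$ counts the ones among the lowest $\lambda$ binary digits, so that $f_\lambda$ is $2^\lambda$-periodic. A carry-propagation lemma (the digit-truncation estimate underlying the work of Mauduit--Rivat) shows that, outside a sparse set of $m$, the value $f(m)$ is governed by a suitably placed window of digits; hence the error committed in replacing $\prod_{j\in S}f(\floor{(n+j)^c})$ by $\prod_{j\in S}f_\lambda(\floor{(n+j)^c})$ is $o(N)$, provided $\lambda$ grows appropriately with $N$. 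The decisive structural fact is that the full-period mean vanishes,
\[
2^{-\lambda}\sum_{0\le m<2^\lambda}f_\lambda(m)=\prod_{0\le k<\lambda}\p{1+(-1)}=0,
\]
reflecting the balance of $\mathbf t$; consequently the trivial Fourier mode will be absent, and no spurious main term competes with $2^{-T}N$.

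Next I would expand each periodic factor into its finite Fourier series $f_\lambda(m)=\sum_{0\le h<2^\lambda}\widehat{f_\lambda}(h)\,\e(hm/2^\lambda)$. Using $\widehat{f_\lambda}(0)=0$ to discard the vanishing frequencies,
\[
\sum_{n\le N}\prod_{j\in S}f_\lambda\p{\floor{(n+j)^c}}
=\sum_{\substack{(h_j)_{j\in S}\\ h_j\ne 0}}\p{\prod_{j\in S}\widehat{f_\lambda}(h_j)}\sum_{n\le N}\e\p{\frac{1}{2^\lambda}\sum_{j\in S}h_j\floor{(n+j)^c}}.
\]
Removing the integer parts by a Vaaler/Erd\H{o}s--Tur\'an approximation of the sawtooth reduces the inner sum, up to errors carrying only additional summable frequencies, to sums of the shape $\sum_{n\le N}\e(g(n))$ with
\[
g(n)=\frac{1}{2^\lambda}\sum_{j\in S}h_j\,(n+j)^c,\qquad g''(n)=\frac{c(c-1)}{2^\lambda}\sum_{j\in S}h_j\,(n+j)^{c-2}.
\]
For a generic frequency vector the leading behaviour is $g''(n)\asymp c(c-1)N^{c-2}2^{-\lambda}\sum_{j\in S}h_j$, which is the quantity feeding the van der Corput estimate.

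The cancellation then comes from the second-derivative test $\sum_{N<n\le 2N}\e(g(n))\ll N\abs{g''}^{1/2}+\abs{g''}^{-1/2}$, while Parseval's identity $\sum_{h}\abs{\widehat{f_\lambda}(h)}^2=1$ controls the weights $\prod_{j\in S}\abs{\widehat{f_\lambda}(h_j)}$, so that after a Cauchy--Schwarz passing from the $\ell^1$-sum over $(h_j)$ to an $\ell^2$-average the large frequencies contribute a power saving. The main obstacle is to make this saving survive for \emph{all} frequency vectors at once and uniformly in $\abs S$: small frequencies make $\abs{g''}$ tiny, so the term $\abs{g''}^{-1/2}$ threatens to exceed $N$, and the degenerate vectors with $\sum_{j\in S}h_j$ small force one to exploit the next-order term (the $\sum_{j\in S}h_j\,j$ contribution) or a higher-order derivative test. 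At the same time the carry-truncation requires $\lambda$ to grow with $N$, producing $\asymp 2^{\lambda\abs S}$ frequency vectors, so the whole argument hinges on balancing the window length $\lambda$ against the van der Corput saving and the floor-removal error; it is precisely this optimisation that confines the admissible range to $1<c<4/3$. Since the oscillation is driven by the second derivative of $(n+j)^c$, which depends on $c$ but not on the block length, the \emph{same} range of $c$ works for every $T$, and this uniformity is what upgrades the known letter-wise and length-two statements to full normality. Summing $C_S(N)=o(N)$ over the nonempty $S$ and inserting the result into the identity of the first paragraph gives the asymptotic frequency $2^{-T}$, which is the assertion of the theorem.
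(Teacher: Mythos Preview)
Your reduction to the correlation sums $C_S(N)$ is correct and equivalent to the paper's reduction to Proposition~\ref{prp:1}. The core of the argument, however, has two genuine gaps.

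First, the truncation step fails whenever $\abs{S}$ is odd. You claim that outside a sparse set one may replace $\prod_{j\in S}f(\floor{(n+j)^c})$ by $\prod_{j\in S}f_\lambda(\floor{(n+j)^c})$. But carry propagation only tells you that for most $n$ the integers $\floor{(n+j)^c}$, $j\in S$, \emph{agree} in their digits above level $\lambda$; writing $q$ for the common value of $\floor{\floor{(n+j)^c}/2^\lambda}$ one gets $\prod_{j\in S}f = (-1)^{\abs{S}\,s(q)}\prod_{j\in S}f_\lambda$, and for odd $\abs{S}$ the factor $(-1)^{s(q)}$ survives and varies with $n$. The paper circumvents this by first applying van der Corput's inequality (Lemma~\ref{lem:vdc}), which converts $\sum_\ell\alpha_\ell s(\cdot)$ into differences with $\sum_\ell\beta_\ell\in 2\ZZ$; only after this step does the carry lemma (Lemma~\ref{lem:f_lambda}) legitimately replace $s$ by $s_\lambda$.

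Second, even granting truncation, your route---one Fourier frequency $h_j$ per $j\in S$, floor removal, then the second-derivative test on $g(n)=2^{-\lambda}\sum_j h_j(n+j)^c$---does not close, and you concede as much: there are $\asymp 2^{\lambda\abs{S}}$ frequency vectors, and for the degenerate ones with small $\sum_j h_j$ the second derivative is tiny, so neither Parseval on $\widehat{f_\lambda}$ nor a ``next-order term'' handwave recovers a saving uniform in $\abs{S}$. The paper's mechanism is structurally different and is precisely what makes the block length irrelevant: it slices $(A,2A]$ according to $\{f(n)\}\in[m/M,(m+1)/M)$ and $f'(n)\in[k/(LM),(k+1)/(LM))$, so that on each slice the displacements $\floor{f(n+\ell)}-\floor{f(n)}$ are \emph{constant} in $n$; equidistribution of $\floor{f(n)}\bmod 2^\lambda$ then reduces everything to a single coefficient $G_\lambda^{i,\beta}(0,d)$ with $d=\floor{k/(LM)}$. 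The decisive input is the Drmota--Mauduit--Rivat mean-square bound (Lemma~\ref{lem:DMR}), which yields a gain $2^{-\eta\lambda}$ after averaging over $d$. You name this ingredient in your opening sentence but never actually use it; without it, no choice of $\lambda$ yields $C_S(N)=o(N)$ for all $T$ in any fixed range of $c$.
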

In order to prove this theorem, it is sufficient to establish the following
exponential sum estimate. (Compare to~\cite[Theorem 2]{DMR2014}).
\begin{proposition}\label{prp:1}
Let $T\geq 1$ be an integer, $\alpha_0=1$ and
$\alpha_1,\ldots,\alpha_{T-1}\in \{0,1\}$.
Assume that $1<c<4/3$.
Then, as $A\ra\infty$,
\begin{equation}
\sum_{A<n\leq 2A}
\e\p{
\frac 12
\sum_{\ell<T}
\alpha_\ell
s\p{\floor{(n+\ell)^c}}
}
=o(A).
\end{equation}
\end{proposition}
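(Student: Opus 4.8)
The plan is to adapt the Fourier-analytic method of Mauduit and Rivat, in the sharper form developed by Drmota, Mauduit and Rivat for the squares, to the shifted arguments $\floor{(n+\ell)^c}$. Abbreviate the phase as $\Phi(n)=\frac12\sum_{\ell<T}\alpha_\ell\,s\p{\floor{(n+\ell)^c}}$, so that $\e(\Phi(n))=(-1)^{\sum_\ell\alpha_\ell s(\floor{(n+\ell)^c})}$ and the proposition asserts cancellation in $\sum_{A<n\le 2A}\e(\Phi(n))$. Since $c>1$, the $T$ arguments $\floor{(n+\ell)^c}$ are, for large $n$, distinct and mutually separated by $\asymp c\,A^{c-1}\to\infty$, and as $n$ runs over $(A,2A]$ each $\floor{(n+\ell)^c}$ runs through an interval of length $\asymp A^c$ with consecutive increments $\asymp cA^{c-1}$. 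The obstruction is that $s$ is tractable only digit by digit, so the first step is to replace each $s$ by a windowed truncation $s_\lambda$ retaining only a block of $\lambda=\lambda(A)$ binary digits, the level $\lambda$ to be optimized at the very end.

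First I would justify this truncation by a carry-propagation estimate: for all but $o(A)$ of the $n$ in $(A,2A]$, the digits of $\floor{(n+\ell)^c}$ inside the chosen window determine $s(\floor{(n+\ell)^c})\bmod 2$ up to a slowly varying high-digit contribution, the exceptional set being small because carries rarely escape the window. The high-order digits, which change only on long subintervals of length $\asymp 2^\mu/A^{c-1}$, are dealt with by splitting $(A,2A]$ into pieces on which they are constant. This reduces the problem to estimating sums of $\e$ applied to a function of the windowed digits alone, that is, to a function of the residues $\floor{(n+\ell)^c}\bmod 2^\lambda$.

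Next I would expand this windowed phase into its finite Fourier series on $\ZZ/2^\lambda\ZZ$, writing $\e\p{\tfrac12\sum_\ell\alpha_\ell s_\lambda(m_\ell)}$ as a sum over frequency vectors $h=(h_\ell)$ of Fourier coefficients times $\e\p{\sum_\ell h_\ell\floor{(n+\ell)^c}/2^\lambda}$. Here the hypothesis $\alpha_0=1$ enters decisively: it forces the combined digital phase to be nonconstant, so that the Gelfond-type product formula for the Fourier transform of $m\mapsto\e(\vartheta s_\lambda(m))$ gives a per-digit factor bounded away from $1$ and, in particular, a vanishing zero-frequency coefficient, which removes the potential main term. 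One obtains an overall gain $2^{-\delta\lambda}$ together with an $\ell^1$ bound on the coefficients polynomial in $\lambda$. After inserting this expansion, the remaining task is to bound, uniformly in $h$, the Piatetski-Shapiro sums $\sum_{A<n\le 2A}\e\p{\sum_\ell h_\ell(n+\ell)^c/2^\lambda}$ obtained after removing the floors by a Vaaler-type Fourier approximation of $\{x\}$, which introduces a harmless auxiliary frequency. These are smooth and are handled by van der Corput's second-derivative test in the form
\[
\sum_{A<n\le 2A}\e(f(n))\ll A\,\lambda_2^{1/2}+\lambda_2^{-1/2},
\]
where $\lambda_2\asymp\abs{\textstyle\sum_\ell h_\ell}\,A^{c-2}/2^\lambda$ is the order of $f''$ on $(A,2A]$ for the combined phase $f(n)=\sum_\ell h_\ell(n+\ell)^c/2^\lambda$.

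The main obstacle, and the source of the restriction $c<4/3$, is balancing the truncation level against the quality of this last estimate. A large $\lambda$ is needed for the Gelfond gain $2^{-\delta\lambda}$ to overcome the polynomial loss from the coefficient sum and from the number of frequencies, whereas the van der Corput bound deteriorates as $2^\lambda$ grows, since $2^\lambda$ sits in the denominator of $\lambda_2$ and the sums become long relative to the available oscillation once $2^\lambda$ exceeds a power of $A^{c-1}$. A further delicate point is that the frequencies with $\sum_\ell h_\ell$ small (so that $f''$ nearly degenerates) must be separated off and treated via the lower-order terms of $\sum_\ell h_\ell(n+\ell)^{c-2}$, using that the functions $(n+\ell)^{c-2}$ are linearly independent so that $f''$ vanishes identically only for $h=0$. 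Carrying out the optimization, an admissible window exists precisely when the second-derivative method still yields a saving for Piatetski-Shapiro sums of length $A$ with arguments of size $A^c$ and increments $A^{c-1}$, which is exactly the threshold $c<4/3$ already present in the work of Mauduit and Rivat. Assembling the truncation error, the Fourier gain and the Weyl-sum bound then gives the stated $o(A)$, uniformly over the admissible coefficient vectors $(\alpha_\ell)$, which is all that the theorem requires.
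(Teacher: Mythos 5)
Your scaffolding (truncate $s$ to a digit window, Fourier-expand, estimate the resulting exponential sums) follows the Mauduit--Rivat tradition, but the central quantitative claim on which your argument rests is false, and it is exactly the point where the actual proof has to do something different. After you expand each factor $\e\p{\tfrac12\alpha_\ell s_\lambda(m_\ell)}$ into its finite Fourier series on $\ZZ/2^\lambda\ZZ$, the coefficient attached to a frequency vector $(h_0,\ldots,h_{T-1})$ is the product $\prod_{\ell<T}\hat g(h_\ell)$, where
$\hat g(h)=2^{-\lambda}\sum_{u<2^\lambda}(-1)^{s_\lambda(u)}\e\p{-hu2^{-\lambda}}$
factors as $\abs{\hat g(h)}=\prod_{j<\lambda}\abs{\sin\p{\pi h2^j2^{-\lambda}}}$. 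The sup norm of this transform does decay like $2^{-\eta\lambda}$ (the Gelfond gain you invoke), but its $\ell^1$ norm is \emph{not} polynomial in $\lambda$: it grows exponentially, like $\Lambda^\lambda$ for some $\Lambda>1$ (Cauchy--Schwarz with Parseval caps it at $2^{\lambda/2}$, and for this particular product it is genuinely exponential). Your final step multiplies a Weyl-sum bound, uniform in the frequencies, by this $\ell^1$ mass, and the loss compounds over the $T$ coordinates ($\Lambda^{T\lambda}$) while the van der Corput saving is a fixed power of $A$ independent of $T$; with $2^\lambda$ necessarily a power of $A$ (of size roughly $f'(A)\asymp A^{c-1}$ for the truncation to be meaningful), the expansion loses more than the second-derivative test can recover, already for moderate $T$. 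So the claimed combination of ``gain $2^{-\delta\lambda}$ together with an $\ell^1$ bound polynomial in $\lambda$'' cannot hold. A secondary gap of the same nature: the frequency vectors with $\sum_\ell h_\ell$ small form a huge set on which the phase's second derivative degenerates, and the appeal to linear independence of the functions $(n+\ell)^{c-2}$ does not control their total coefficient mass.

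This failure of the naive term-by-term expansion is precisely why the paper proceeds along a different route. It first applies van der Corput's inequality (Lemma~\ref{lem:vdc}), producing differenced coefficient vectors $\beta$ with $\beta_0=1$ and $\sum_\ell\beta_\ell\in2\ZZ$; truncates $s$ to $s_\lambda$ via the carry lemma (Lemma~\ref{lem:f_lambda}); and then splits $(A,2A]$ into pieces $J(k,L,M)$ intersected with $\{n:\frac mM\leq\{f(n)\}<\frac{m+1}M\}$, on which the increments satisfy $\floor{f(n+\ell)}-\floor{f(n)}=\ell\floor{k/(LM)}+i_\ell$ independently of $n$. Equidistribution of $\floor{f(n)}$ modulo $2^\lambda$ on each piece (Lemma~\ref{lem:f_discrepancy}; this is where $c<4/3$ enters, through hypothesis~\eqref{hyp:c}) converts the sum over $n$ into the complete sum $G_\lambda^{i,\beta}(0,d)$ of~\eqref{eqn:fourier_coefficients}, which is a Fourier coefficient of the \emph{joint} function $u\mapsto\sum_\ell\beta_\ell s_\lambda(u+\ell d+i_\ell)$ rather than a product of one-dimensional transforms, and the decisive input is the deep $L^2$-average bound over $d$ of Lemma~\ref{lem:DMR} (Proposition~1 of~\cite{DMR2014}). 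Your proposal contains no substitute for that lemma, and without it, or something equally strong about joint digital correlations, the argument does not close.
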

The plan of the remaining pages is as follows: we first state a series of
lemmas, of which the first one is the most important one, that we use in the
proof of Proposition~\ref{prp:1}. Afterwards, we give a short explanation of
the idea of proof which is intended to facilitate understanding. That paragraph
is then followed by the proof itself.
\section{Auxiliary results}
Let $\lambda\geq 0$ be an integer.
The \emph{truncated sum-of-digits function} $s_\lambda$ is defined by
\[
s_\lambda(n)=s\p{n\bmod 2^\lambda}
.
\]
For sequences of integers $\beta=(\beta_0,\ldots,\beta_{L-1})\in\{0,1\}^L$ and
$i=(i_0,\ldots,i_{L-1})$ we define discrete Fourier coefficients
\begin{equation}\label{eqn:fourier_coefficients}
G_\lambda^{i,\beta}(h,d)
=
\frac 1{2^\lambda}
\sum_{u<2^\lambda}
\e\p{
\frac 12
\sum_{\ell<L}
\beta_\ell s_\lambda(u+\ell d+i_\ell)
-hu2^{-\lambda}
}.
\end{equation}
One of the main ingredients in our proof is the following estimate for these
Fourier coefficients that is used as an essential tool in the proof of the
normality result for squares~\cite{DMR2014} and which is Proposition 1 in that article.
\begin{lemma}[Drmota, Mauduit, Rivat]\label{lem:DMR}
Let $L\geq 1$ be an integer. For all sequences
$\beta=(\beta_0,\ldots,\beta_{L-1})\in\{0,1\}^L$ satisfying $\beta_0=1$ and
$\beta_0+\cdots+\beta_{L-1}\in 2\ZZ$ and for all $i=(i_0,\ldots,i_{L-1})$
satisfying $i_0=0$ and $i_{\ell+1}-i_\ell\in\{0,1\}$, there exist $\eta>0$ and
$c_0$ such that for all integers $h$ and $\lambda,\lambda'\geq 0$ with
$\lambda/2\leq \lambda'\leq \lambda$ we have
\[
\frac 1{2^{\lambda'}}
\sum_{0\leq d<2^{\lambda'}}
\abs{G_\lambda^{i,\beta}(h,d)}^2
\leq
c_0
2^{-\eta\lambda}.
\]
\end{lemma}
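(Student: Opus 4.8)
The plan is to prove this second-moment bound by a transfer-operator (carry-automaton) analysis of the binary digits, read from the least to the most significant position. I would first expand the square, writing
\[
\frac 1{2^{\lambda'}}\sum_{0\le d<2^{\lambda'}}\abs{G_\lambda^{i,\beta}(h,d)}^2
=\frac 1{2^{\lambda'}2^{2\lambda}}\sum_{0\le d<2^{\lambda'}}\sum_{u,v<2^\lambda}\e\p{\Phi}\,\e\p{-h(u-v)2^{-\lambda}},
\]
where $\Phi=\tfrac12\sum_{\ell<L}\beta_\ell\bigl[s_\lambda(u+\ell d+i_\ell)-s_\lambda(v+\ell d+i_\ell)\bigr]$ is the difference of the two digit-sum phases. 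The frequency $h$ enters only through the linear character $\e\p{-h(u-v)2^{-\lambda}}$, which factors digit by digit and reappears unchanged (against a halved modulus) at each stage of the recursion below; it therefore rides passively through the argument, and the final bound comes out uniform in $h$. All the genuine cancellation is produced by the average over the shift $d$.

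The engine is the recursion $s_\lambda(n)=(n\bmod 2)+s_{\lambda-1}\p{\floor{n/2}}$. Processing the numbers $u+\ell d+i_\ell$ one binary digit at a time, their joint low-order digits are generated by a finite automaton whose state is the vector of pending carries $\p{c_\ell}_{\ell<L}$; since $\ell<L$ and the shifts increase by at most $1$, these carries stay bounded, so the automaton has boundedly many states. At each digit position the phase $\Phi$ and the linear character accumulate one factor, and summing over the free digits of $u,v$ (and, for positions below $\lambda'$, over the digit of $d$) turns one step of the recursion into the action of a fixed transfer operator on the finite-dimensional space of functions of the carry state. In this language the averaged square is (a matrix element of) a product of $\lambda$ such operators, and the decay we want is precisely a bound on the norm of this product.

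The decisive point is that the $d$-averaged operator is a genuine contraction: I would aim to show that its spectral radius is some $\rho<1$. This is where the hypotheses $\beta_0=1$, $\sum_{\ell<L}\beta_\ell\in 2\ZZ$, $i_0=0$ and $i_{\ell+1}-i_\ell\in\{0,1\}$ enter --- they are exactly the non-degeneracy conditions that prevent the phase from degenerating into one for which the $d$-average fails to contract the operator. Granting a spectral radius $\rho<1$, the product over the $\lambda'\ge\lambda/2$ digit positions at which $d$ is averaged is $\ll\rho^{\,c_1\lambda}=2^{-\eta\lambda}$ with $\eta=c_1\log_2(1/\rho)>0$; the remaining $\lambda-\lambda'\le\lambda/2$ positions, where $d$ carries no free digit, contribute operators of norm at most $1$ and hence do not spoil the bound.

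The step I expect to be the main obstacle is establishing this spectral gap: one must prove that, under exactly the stated hypotheses, the $d$-averaged transfer operator strictly contracts, uniformly over all carry states and over all frequencies $h$. This requires a careful analysis of how the low-order-digit character depends jointly on the digits of $u,v,d$ and on the incoming carries, verifying that a nontrivial oscillation survives the averaging for a fixed positive proportion of configurations. Keeping this contraction uniform as carries accumulate over the $\asymp\lambda$ successive digit positions, and correctly splicing the averaged regime (positions below $\lambda'$) to the unaveraged one above it, is the delicate technical core of the argument, and it is exactly why one needs the full range $\lambda'\ge\lambda/2$ to retain enough independent digits of $d$.
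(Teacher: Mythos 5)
First, a point of reference: this lemma is not proved in the paper at all --- it is imported verbatim from Drmota--Mauduit--Rivat \cite{DMR2014}, where it is Proposition~1, and the present paper simply cites it. So the only meaningful comparison is with the argument in \cite{DMR2014}, which does proceed, broadly as in your sketch, by working through the binary digits and establishing recursive estimates for the coefficients $G_\lambda^{i,\beta}$; in that sense your carry-automaton/transfer-operator framing points in a reasonable direction.

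Nevertheless your proposal has a genuine gap, and you name it yourself: the contraction (the ``spectral gap'') of the $d$-averaged transfer operator is never proved. This is not a technical detail that can be deferred --- it is the entire analytic content of the lemma. Expanding the square, bounding the carries, and writing the average as a matrix element of a product of operators is bookkeeping that would go through verbatim for statements that are false; all the cancellation must be extracted in exactly the step you postpone. Two concrete defects. (i) The hypotheses $\beta_0=1$, $\beta_0+\cdots+\beta_{L-1}\in 2\ZZ$, $i_0=0$, $i_{\ell+1}-i_\ell\in\{0,1\}$ are invoked only as ``exactly the non-degeneracy conditions''; a proof must exhibit the mechanism by which each of them excludes the degenerate configurations where the $d$-average fails to contract, and your sketch never does this --- in particular it is nowhere visible where the parity of $\beta_0+\cdots+\beta_{L-1}$ is consumed. (ii) Your decay mechanism does not fit the structure of the problem when $h\neq 0$: as you yourself observe, the character $\e\p{-h(u-v)2^{-\lambda}}$ contributes at digit position $j$ a factor depending on $h2^{j-\lambda}\bmod 1$, so the per-position operators are pairwise \emph{different}; a product of distinct operators is not controlled by the spectral radius of any single one of them. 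What is actually needed is an operator-\emph{norm} bound strictly less than $1$, uniform in this position-dependent twist and in the carry state, over $\asymp\lambda$ consecutive positions --- and that uniform inequality (proved in \cite{DMR2014} via explicit recursive trigonometric estimates) is precisely the hard part. Until it is stated and established, what you have is a plausible strategy outline, not a proof.
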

In the proof of Proposition~\ref{prp:1} we will be concerned with more general
functions than just $x\mapsto x^c$. The following lemmas are dealing with such
functions $f$. The first lemma is a consequence of the Erd\H{o}s--Tur\'an
inequality combined with an exponential sum estimate due to van der Corput.
\begin{lemma}\label{lem:f_discrepancy}
Let $J$ be an interval in $\RR$ containing $N$ integers and let $f:J\ra\RR$ be
twice continuously differentiable in $J$. Let $\Lambda>0$ be such that
\[\Lambda\leq \abs{f''(x)}\leq 2\Lambda\]
for all $x\in J$.
Assume that $m,s$ and $M$ are nonnegative integers such that $m<M$. Then for
all integers $K,L\geq 1$ we have
\begin{multline*}
\abs{\{n\in J:\frac mM\leq \{f(n)\}<\frac{m+1}M,\floor{f(n)}\equiv s\bmod K\}}
\\
=
\frac N{KM}
+
O\p{
\frac N{KLM}
+
N\p{\Lambda LM}^{1/2}
+
\p{\Lambda LM}^{-1/2}
}
.
\end{multline*}
\end{lemma}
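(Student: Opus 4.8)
The plan is to recast the two simultaneous conditions as a single equidistribution statement and then to estimate the resulting discrepancy by standard means. Writing $f(n)=K\floor{f(n)/K}+r$ with $r\in[0,K)$, I first observe that the requirements $\floor{f(n)}\equiv s \bmod K$ and $m/M\le\{f(n)\}<(m+1)/M$ hold simultaneously if and only if $r\in[s+m/M,s+(m+1)/M)$; dividing by $K$, this is exactly the condition that $\{f(n)/K\}$ lies in the interval $I=[(sM+m)/(KM),(sM+m+1)/(KM))$ of length $1/(KM)$. Hence the quantity to be estimated equals $\abs{\{n\in J:\{g(n)\}\in I\}}$, where $g=f/K$ is again twice continuously differentiable with $\Lambda/K\le\abs{g''}\le 2\Lambda/K$ on $J$. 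The main term $N/(KM)=N\abs{I}$ appearing in the statement is precisely the count one expects for a uniformly distributed sequence, which already reveals the estimate to be a discrepancy bound in disguise.

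Next I would apply the Erd\H{o}s--Tur\'an inequality to the $N$ points $\{g(n)\}$, $n\in J$. With a truncation parameter $H\geq 1$ to be fixed later, this yields
\[
\abs{\{n\in J:\{g(n)\}\in I\}}
=\frac{N}{KM}+O\p{\frac NH+\sum_{1\le h\le H}\frac 1h\abs{S_h}},
\]
where $S_h=\sum_{n\in J}\e\p{h\,g(n)}=\sum_{n\in J}\e\p{h f(n)/K}$. Everything is thereby reduced to estimating the exponential sums $S_h$, and I note at this stage that the choice $H\asymp KLM$ will turn the truncation error $N/H$ into the first stated error term $N/(KLM)$.

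To bound $S_h$ I would invoke van der Corput's second-derivative estimate. Since $\p{hf/K}''=hf''/K$ is continuous and nowhere zero on $J$, it has constant sign there (so that $g'$ is monotonic, as the estimate requires), and it satisfies $h\Lambda/K\le\abs{\p{hf/K}''}\le 2h\Lambda/K$. One therefore obtains
\[
\abs{S_h}\ll N\p{h\Lambda/K}^{1/2}+\p{h\Lambda/K}^{-1/2}.
\]
Inserting this into the sum over $h$, the first (factor-of-$N$) part contributes $N(\Lambda/K)^{1/2}\sum_{h\le H}h^{-1/2}\ll N(\Lambda H/K)^{1/2}$, which with $H\asymp KLM$ becomes exactly the second stated error term $N\p{\Lambda LM}^{1/2}$.

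The main obstacle is the remaining, low-frequency contribution $\sum_{h\le H}\frac 1h\p{h\Lambda/K}^{-1/2}$, coming from the minor term of van der Corput's estimate, which is precisely where that estimate is weakest and where the parameters $K$, $\Lambda$, $L$, $M$ interact most delicately. For the very smallest $h$, where the minor term would exceed $N$, one must fall back on the trivial bound $\abs{S_h}\le N$, and the range where van der Corput is genuinely applicable must be separated carefully; it is the book-keeping of this minor contribution, carried out uniformly in all parameters, that produces the final error term $\p{\Lambda LM}^{-1/2}$. I expect this step — rather than the reduction or the two mechanical applications of Erd\H{o}s--Tur\'an and van der Corput — to require the most care, since it is here that the sharpness of the stated bound resides.
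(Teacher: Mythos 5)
Your proposal reproduces the paper's own strategy step for step: the rewriting of the two conditions as $\{f(n)/K\}\in I$ with $\abs{I}=1/(KM)$, the Erd\H{o}s--Tur\'an inequality with truncation $H\asymp KLM$, and the van der Corput second-derivative bound for $S_h=\sum_{n\in J}\e(hf(n)/K)$ are exactly the paper's proof, and your handling of the truncation term $N/(KLM)$ and of the major contribution $N(\Lambda LM)^{1/2}$ is correct. The gap is precisely the step you deferred, and it is not a matter of careful book-keeping: it cannot be done. The low-frequency contribution is
\[
\sum_{1\leq h\leq H}\frac 1h\p{\frac{h\Lambda}{K}}^{-1/2}
=
\p{\frac K\Lambda}^{1/2}\sum_{1\leq h\leq H}h^{-3/2}
\asymp
\p{\frac K\Lambda}^{1/2},
\]
since $\sum_{h\geq 1}h^{-3/2}$ converges and is at least $1$; this exceeds the target $(\Lambda LM)^{-1/2}$ by exactly the factor $(KLM)^{1/2}$, and the discrepancy between them cannot be removed by any choice of $H$, because this term is independent of $H$. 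Falling back on the trivial bound $\abs{S_h}\leq N$ for small $h$ only matters in the range $(h\Lambda/K)^{-1/2}>N$ and there it produces a term of size $N\log$, again not $(\Lambda LM)^{-1/2}$. What your argument (and the paper's) actually proves is the lemma with last error term $K^{1/2}\Lambda^{-1/2}$ in place of $(\Lambda LM)^{-1/2}$.

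Indeed the statement as printed is false, so no completion of your sketch exists. Take $K=1$, $M=2$, $m=s=0$, $f(x)=\Lambda x^2/2$ on $J=[0,\floor{\Lambda^{-1/2}})$, so $N\asymp\Lambda^{-1/2}$; then every $n\in J$ satisfies both conditions, since $\{f(n)\}=f(n)<1/2$ and $\floor{f(n)}=0$, so the left-hand side equals $N$. But on the right-hand side $N(\Lambda LM)^{1/2}\asymp L^{1/2}$ and $(\Lambda LM)^{-1/2}\asymp NL^{-1/2}$, so the right-hand side is $N/2+O\p{NL^{-1/2}+L^{1/2}}$, which is $(1/2+o(1))N$ once $L$ is taken large (against the absolute implied constant) and then $\Lambda$ small --- a contradiction. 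You should also know that the paper's own proof commits exactly the error you were wary of: it invokes the estimate $\sum_{1\leq h\leq H}h^{\delta}\ll H^{\delta+1}$ for $\delta=-3/2$, which is false, since that sum is $\asymp 1$, not $\ll H^{-1/2}$. So your instinct that the difficulty of the lemma is concentrated in the minor-term contribution was sound; the honest conclusion is that the lemma must be restated with $K^{1/2}\Lambda^{-1/2}$ as its last error term, and its two applications later in the paper (with $K=1$ and $K=2^\lambda$) then need to be re-examined with that weaker bound.
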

\begin{proof}
Assume without loss of generality that $0\leq s<K$.
We set
\[I=\left[\frac{Ms+m}{KM},\frac {Ms+m+1}{KM}\right),\]
which is an interval in $[0,1)$.
Then by the Erd\H{o}s--Tur\'an inequality we have
\begin{multline*}
\abs{
\abs{
\left\{n\in J:m/M\leq \{f(n)\}<(m+1)/M,
\floor{f(n)}\equiv s\bmod K
\right\}
}
-
\frac N{KM}
}
\\
=
\abs{
\abs{
\left\{n\in J:\{f(n)/K\}\in I\right\}
}
-
\frac N{KM}
}
\ll
\frac NH
+
\sum_{1\leq h\leq H}
\frac 1h
\abs{
\sum_{n\in J}
\e\p{hf(n)/K}
}
.
\end{multline*}
Applying Theorem 2.2 from~\cite{GK91}, which is the announced exponential sum
estimate due to van der Corput, yields
\[
\sum_{n\in J}
\e\p{hf(n)/K}
\ll
N\Lambda^{1/2}\p{h/K}^{1/2}+\Lambda^{-1/2}\p{K/h}^{1/2}
.
\]
The statement therefore follows from the choice $H=KLM$ and the estimate
$\sum_{1\leq h\leq H}h^\delta\ll H^{\delta+1}$ that holds for
$\delta\in\{-3/2,-1/2\}$.
\end{proof}
The second lemma provides us with some elementary estimates concerning the
derivatives of functions $f$ that we are interested in.
\begin{lemma}\label{lem_properties_f}
Let $x_0\geq 1$.
Assume that $f:[x_0,\infty)\ra\RR$ is two times continuously differentiable,
$f,f',f''>0$
and that for $x_0\leq x\leq y\leq 2x$ we have
$f''(x)/2\leq f''(y)\leq f''(x)$.
Then the following estimates hold.
\begin{alignat}{3}
  xf''(x) &\leq 2yf''(y)
  &&
    \text{ for }x_0\leq x\leq y,
    \label{eqn:almost_monotone}
\\
    xf''(x) &\leq 2f'(x)
    \quad
  &&
    \text{ for }x\geq 2x_0,
    \label{eqn:df_d2f_1}
\\
    f'(x)-f'(x_0)&\leq 2xf''(x)\log x
    \quad
  &&
    \text{ for }x\geq x_0,
    \label{eqn:df_d2f_2}
\\
    f'(y)&\leq 3 f'(x)
  &&
    \text{ for }2x_0\leq x\leq y\leq 2x.
    \label{eqn:df_quotient}
\end{alignat}
\end{lemma}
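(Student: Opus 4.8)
The plan is to establish the four estimates essentially in the order stated, exploiting the local control $f''(x)/2\leq f''(y)\leq f''(x)$ for $x_0\leq x\leq y\leq 2x$. The central object is the product $g(t)=tf''(t)$, and the key observation is that $g$ is almost non-decreasing: doubling the argument multiplies the factor $t$ by $2$ while, by hypothesis, decreasing $f''$ by at most a factor $2$, so that $g(2t)\geq g(t)$ exactly.

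For \eqref{eqn:almost_monotone} I would first record the within-interval bound: for $t\leq s\leq 2t$ the lower estimate $f''(s)\geq f''(t)/2$ together with $s\geq t$ gives $g(s)\geq g(t)/2$, i.e. $g(t)\leq 2g(s)$. Given $x_0\leq x\leq y$, I would then choose the largest $j\geq 0$ with $2^jx\leq y$, so that $y$ lies in the single doubling interval $[2^jx,2^{j+1}x]$ and hence $g(2^jx)\leq 2g(y)$; iterating the exact monotonicity $g(2^ix)\leq g(2^{i+1}x)$ along $x,2x,\ldots,2^jx$ gives $g(x)\leq g(2^jx)$, and combining yields $g(x)\leq 2g(y)$. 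I expect this to be the main obstacle: naively chaining the within-interval bound would accumulate a factor $2^j$, and the whole point is to notice that $g$ is monotone at the doubling points, so that only a single factor $2$ is ever lost.

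The remaining estimates follow by integrating $f''$ and inserting the local bounds. For \eqref{eqn:df_d2f_1}, valid for $x\geq 2x_0$ so that $x/2\geq x_0$, I would write $f'(x)\geq \int_{x/2}^x f''(t)\,dt$ using $f'>0$, and use $f''(t)\geq f''(x)$ for $t\in[x/2,x]$ (which holds by the hypothesis applied to the pair $(t,x)$, since then $t\leq x\leq 2t$); this yields $f'(x)\geq (x/2)f''(x)$. For \eqref{eqn:df_d2f_2} I would write $f'(x)-f'(x_0)=\int_{x_0}^x f''(t)\,dt$ and insert the bound $f''(t)\leq 2xf''(x)/t$ coming from \eqref{eqn:almost_monotone}; integrating $dt/t$ produces the factor $\log x-\log x_0\leq \log x$, where $\log x_0\geq 0$ because $x_0\geq 1$.

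Finally, for \eqref{eqn:df_quotient} with $2x_0\leq x\leq y\leq 2x$ I would write $f'(y)=f'(x)+\int_x^y f''(t)\,dt$ and bound the integral by $(y-x)f''(x)\leq xf''(x)$, using $f''(t)\leq f''(x)$ on $[x,y]\subseteq[x,2x]$ together with $y-x\leq x$; then \eqref{eqn:df_d2f_1} gives $xf''(x)\leq 2f'(x)$, whence $f'(y)\leq 3f'(x)$. All the manipulations in these last three steps are routine once \eqref{eqn:almost_monotone} and \eqref{eqn:df_d2f_1} are in hand.
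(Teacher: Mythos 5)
Your proof is correct and follows essentially the same route as the paper: the dyadic argument for \eqref{eqn:almost_monotone} (exact monotonicity of $t\mapsto tf''(t)$ at doubling points plus a single factor $2$ for the remainder) is the paper's argument in slightly different clothing, and the remaining three estimates use the same local control of $f''$, with your integrals playing the role of the paper's Mean Value Theorem. No gaps.
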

\begin{proof}
In order to prove~\eqref{eqn:almost_monotone}, we show the equivalent statement
that
\[
f''(x)\leq 2af''(ax)
\]
for $a\geq 1$ and $x\geq x_0$.
This is clear for $a=2^k$ by the inequality $f''(x)/2\leq f''(2x)$.
If $2^k\leq a<2^{k+1}$, we have
$f''(ax)
\geq f''(2^kx)/2
\geq 2^{-k}f''(x)/2
\geq f''(x)/(2a)$.
This proves~\eqref{eqn:almost_monotone}.
Inequality~\eqref{eqn:df_d2f_1} is proved via the Mean Value Theorem and the
monotonicity of $f''$: there exists some $\xi\in [x/2,x]$ such that
$f'(x)
\geq
f'(x)-f'(x/2)
=
(x/2)f''(\xi)
\geq
xf''(x)/2
$.
For the proof of~\eqref{eqn:df_d2f_2}, let $x\geq x_0$. For $x_0\leq t\leq x$
we have $tf''(t) \leq 2xf''(x)$ by~\eqref{eqn:almost_monotone}
and therefore
\[
    f'(x)=f'(x_0)+\int_{x_0}^xf''(t)\,\mathrm d t
  \leq
    f'(x_0)+2xf''(x)\int_{x_0}^x\frac 1t\,\mathrm d t
  \leq
    f'(x_0)+2xf''(x)\log x
.
\]
Finally, we prove~\eqref{eqn:df_quotient}: there exists $\xi\in [x,2x]$ such
that $f'(2x)-f'(x)=xf''(\xi)\leq xf''(x)$. Together with~\eqref{eqn:df_d2f_1}
we get $f'(y)\leq f'(2x)\leq 3f'(x)$.
\end{proof}
Assume that $f$ is a function such as in Lemma~\ref{lem_properties_f}.
Since $f''>0$, the first derivative $f'$ is increasing in $[x_0,\infty)$.
Moreover, setting $x=x_0$ in~\eqref{eqn:almost_monotone}, we get $f''(y)\gg 1/y$, therefore $f'(y)\gg \log y$, in particular $f'(y)\ra\infty$ as $y\ra\infty$.
This clearly implies $f(y)\ra\infty$.
We will use this observation in the proof of the following
elementary carry propagation lemma.
Statements of this type were used in the articles~\cite{MR2009,MR2010} by
Mauduit and Rivat on the sum-of-digits function of primes and squares.
\begin{lemma}\label{lem:f_lambda}
Let $x_0\geq 1$ and assume that $f:[x_0,\infty)\ra \RR$ is two times
continuously differentiable, $f,f',f''>0$ and that for $x_0\leq x\leq y\leq 2x$
we have $f''(x)/2\leq f''(y)\leq f''(x)$. Let $\lambda,r,T \geq 0$ be
integers, $A\geq 2x_0$ and assume that $a,b$ are integers such that $a\leq b$
and $[a,b)\subseteq [A,2A]$. Then
\begin{multline*}
  \left|
    \left\{
      n\in [a,b):
      s\p{\floor{f(n+\ell)}}-s\p{\floor{f(n+\ell+r)}}
    \right.
  \right.
  \\
  \neq
  \left.
    \left.
      s_\lambda\p{\floor{f(n+\ell)}}-s_\lambda\p{\floor{f(n+\ell+r)}}
      \textrm{ for some }
      \ell\in\{0,\ldots,T-1\}
    \right\}
  \right|
\\
\ll
  (r+T)\p{(b-a)f'(A)2^{-\lambda}+1}
.
\end{multline*}
\end{lemma}
\begin{proof}
The integers $n$ such that $2A-r-T\leq n<b$ contribute an error of at most $r+T$,
therefore it is sufficient to assume that $b\leq 2A-r-T$.
Let $E=(r+T)f'(2A)$. If $E\geq 2^\lambda$, then by~\eqref{eqn:df_quotient} the statement holds trivially.
We assume therefore that $E<2^\lambda$.
Moreover, we may assume that $T\geq 1$, since the statement is vacuous for $T=0$.
Assume that $a\leq n<b$. We first note that if
\begin{equation}\label{eqn:carry_sufficient}
f(n) \in [0,2^\lambda-E)+2^\lambda\ZZ,
\end{equation}
then
\[
s\p{\floor{f(n+\ell)}}-s\p{\floor{f(n+\ell+r)}}
=s_\lambda\p{\floor{f(n+\ell)}}-s_\lambda\p{\floor{f(n+\ell+r)}}
\]
for all $\ell\in\{0,\ldots,T-1\}$.
Indeed, the Mean Value Theorem and the monotonicity of $f'$ imply $f(n+\ell+r)-f(n)\leq E$ and $f(n+\ell)-f(n)\leq E$ for $0\leq \ell<T$.
If~\eqref{eqn:carry_sufficient} holds,
there is therefore an integer $s$ such that $f(n+\ell),f(n+\ell+r)\in[s2^\lambda,(s+1)2^\lambda-1]$ for all $\ell\in\{0,\ldots,T-1\}$, which implies that the binary digits of
$\floor{f(n+\ell)}$ and $\floor{f(n+\ell+r)}$ with indices $\geq \lambda$ are the same.

It remains to count the number of exceptions to~\eqref{eqn:carry_sufficient}.
ø
Since $f$ is increasing to infinity, we may set
$a_s=\min\{n:s2^\lambda\leq f(n)\}$ and
$b_s=\min\{n:(s+1)2^\lambda-E\leq f(n)\}$
for all $s\in\ZZ$.
Clearly we have $f(n)\in[0,2^\lambda-E)+2^\lambda\ZZ$ if $a_s\leq n<b_s$.
It is therefore sufficient to count the number of $n\in[a,b)$ such that
$ b_s\leq n<a_{s+1}$ for some $s$.
By the Mean Value Theorem and~\eqref{eqn:df_quotient} we have
\begin{equation}\label{eqn:interval_length}
\abs{[b_s,a_{s+1})\cap [a,b)}\ll E/f'(A)\ll r+T
\end{equation}
for all $s\in\ZZ$.
Let $s_0$ be minimal such that $a_{s_0}\geq a$.
Moreover, let $s_1$ be maximal such that $a_{s_1}\leq b$.
Then the number of $s$ such that the left hand side of~\eqref{eqn:interval_length} is nonempty is bounded by $s_1-s_0+2$, moreover $a_{s+1}-a_s\gg 2^\lambda/f'(A)$ for $s_0\leq s<s_1$, which follows again from the Mean Value Theorem and~\eqref{eqn:df_quotient}.
Noting that
\[
(s_1-s_0)\min_{s_0\leq s<s_1}(a_{s+1}-a_s)
\leq
\sum_{s_0\leq s<s_1}(a_{s+1}-a_s)
=
a_{s_1}-a_{s_0}
\leq b-a
\]
finishes the proof.

\end{proof}
The inequality of van der Corput is well known. For a proof of this fact see
for example~\cite{MR2010}.
\begin{lemma}\label{lem:vdc}
Let $I$ be a finite interval in $\ZZ$ and
let $a_n$ be a complex number for $n\in I$.
Then
\[
  \abs{\sum_{n\in I}a_n}^2
\leq
  \frac{\abs{I}-1+R}R
  \sum_{n\in I}\abs{a_n}^2
+
  2\frac{\abs{I}-1+R}R\sum_{1\leq r<R}\p{1-\frac rR}
  \mathrm{Re}
  \sum_{\substack{n\in I\\n+r\in I}}a_n\overline{a_{n+r}}
\]
for all integers $R\geq 1$.
\end{lemma}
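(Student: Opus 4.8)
The plan is to derive this standard form of van der Corput's inequality from a single application of the Cauchy--Schwarz inequality, preceded by an averaging (``shifting'') trick. First I would extend the given data by setting $a_n=0$ for every $n\in\ZZ\setminus I$, so that all sums over $n$ may be taken over $\ZZ$ without changing their values and without worrying about the endpoints of $I$. The point of departure is the identity
\[
R\sum_{n\in\ZZ}a_n
=
\sum_{n\in\ZZ}\ \sum_{0\leq j<R}a_{n+j},
\]
which holds because each fixed shift $j\in\{0,\ldots,R-1\}$ contributes $\sum_{n}a_{n+j}=\sum_n a_n$. The inner sum $\sum_{0\leq j<R}a_{n+j}$ can be nonzero only when the window $\{n,n+1,\ldots,n+R-1\}$ meets $I$, and the set of such $n$ is the union of the $R$ translates $I,\,I-1,\,\ldots,\,I-(R-1)$, hence an interval containing exactly $\abs{I}-1+R$ integers.

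Next I would apply the Cauchy--Schwarz inequality to the right-hand side, exploiting that at most $\abs{I}-1+R$ summands are nonzero, to obtain
\[
R^2\abs{\sum_{n\in I}a_n}^2
\leq
\p{\abs{I}-1+R}
\sum_{n\in\ZZ}\abs{\sum_{0\leq j<R}a_{n+j}}^2.
\]
The remaining task is to evaluate the right-hand sum. Expanding the square gives $\sum_{0\leq j,k<R}a_{n+j}\overline{a_{n+k}}$; grouping the resulting terms according to the difference $r=j-k$ and substituting $m=n+k$ shows that every value of $r$ with $\abs{r}<R$ appears with multiplicity $R-\abs{r}$, so that
\[
\sum_{n\in\ZZ}\abs{\sum_{0\leq j<R}a_{n+j}}^2
=
\sum_{\abs{r}<R}\p{R-\abs{r}}\sum_{m\in\ZZ}a_{m+r}\overline{a_m}.
\]

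Finally I would isolate the diagonal contribution $r=0$, which equals $R\sum_{n\in I}\abs{a_n}^2$, and pair each remaining term for $r$ with the one for $-r$. Since these two sums are complex conjugates of one another, their combined contribution is $2\p{R-r}\,\mathrm{Re}\sum_{m}a_m\overline{a_{m+r}}$, where the extension by zero turns this last sum into $\sum_{\substack{n\in I\\ n+r\in I}}a_n\overline{a_{n+r}}$. Substituting the resulting identity
\[
\sum_{n\in\ZZ}\abs{\sum_{0\leq j<R}a_{n+j}}^2
=
R\sum_{n\in I}\abs{a_n}^2
+
2\sum_{1\leq r<R}\p{R-r}\,\mathrm{Re}\sum_{\substack{n\in I\\ n+r\in I}}a_n\overline{a_{n+r}}
\]
into the Cauchy--Schwarz bound, dividing through by $R^2$, and rewriting $\p{R-r}/R^2=\p{1-r/R}/R$ produces exactly the asserted inequality. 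I expect the only real care to be needed in the bookkeeping: confirming that the effective support consists of precisely $\abs{I}-1+R$ integers, that the shift $r$ occurs with multiplicity $R-\abs{r}$, and that the conjugate pairing of $r$ and $-r$ correctly produces the real part in the stated normalization.
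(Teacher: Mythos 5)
Your proof is correct and complete. Note that the paper itself does not prove this lemma at all --- it simply cites the literature (the Mauduit--Rivat paper on the sum of digits of squares) for a proof, so there is no internal argument to compare against; your write-up is in fact the standard proof found in such references: extend $a_n$ by zero, average over the $R$ shifts, apply Cauchy--Schwarz using that the shifted windows are supported on an interval of $\abs{I}-1+R$ integers, expand the square, and collect the off-diagonal terms $r$ and $-r$ into real parts. All the bookkeeping you flag as delicate checks out: the support count $\abs{I}-1+R$ is exact since $I$ is an interval, the multiplicity of the shift $r$ is $R-\abs{r}$, the conjugate pairing gives $2(R-r)\,\mathrm{Re}\sum_{n\in I,\,n+r\in I}a_n\overline{a_{n+r}}$, and dividing by $R^2$ yields precisely the stated normalization $\p{1-r/R}\p{\abs{I}-1+R}/R$. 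Your argument would serve as a self-contained replacement for the paper's citation.
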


\noindent
\textbf{Idea of the proof of Proposition~\ref{prp:1}.}
We will prove the result for a more general class of functions $f$, which
yields the generalization announced in the introduction. The function
$f(x)=x^c$ is a special case of such a function. In order to obtain the
nontrivial estimate stated in the proposition, we first apply van der Corput's
inequality to the left hand side. This allows us, using
Lemma~\ref{lem:f_lambda}, to replace the function $s$ by the truncated
sum-of-digits function $s_\lambda$. We then split the summation range $(A,2A]$
into smaller sets, defined by the restrictions $\{f(n)\}\in
\left[m/M,(m+1)/M\right)$ and $k/(LM)\leq f'(n),f'(n+L-1)<(k+1)/(LM)$, where
$L$ and $M$ are chosen later. The idea behind this is that for given $m$ and
$k$, the differences
$\floor{f(n+1)}-\floor{f(n)},\ldots,\floor{f(n+L-1)}-\floor{f(n)}$ should not
depend on the choice of $n$, as long as  $n$ is contained in the set
corresponding to $m$ and $k$. (In fact this will be the case for most $m$,
which is sufficient.) Moreover, as $n$ runs through the set defined by the
above restrictions, the values $\floor{f(n)}$ are uniformly distributed in
residue classes modulo $2^\lambda$. (This step requires the condition
$c<4/3$.) These observations and the $2^\lambda$-periodicity of $s_\lambda$
allow us to remove the function $f$ and to sum over the index set
$\{0,\ldots,2^\lambda-1\}$ instead. We obtain expressions as
in~\eqref{eqn:fourier_coefficients}, for which we have nontrivial estimates by
Lemma~\ref{lem:DMR}.
\section{Proof of the theorem}
Assume that $x_0\geq 1$ and let $f:[x_0,\infty)\ra\RR$ be a two times
continuously differentiable function satisfying the following conditions.
\begin{enumerate}[(a)]
\item \label{hyp:a}
$f,f',f''>0$.
\item \label{hyp:b} For $x_0\leq x\leq y\leq 2x$ we have $f''(x)/2\leq
f''(y)\leq f''(x)$.
\item \label{hyp:c} There exists a $C$ such that $f''(x)\leq
Cx^{-2/3}\log^{-1}x$ for all $x\geq x_0$.
\item \label{hyp:d} For all $k\geq 0$ there exists a $C_k>0$ such that $f''(x)\geq C_k
x^{-1}\log^k(x)$ for all $x\geq x_0$.
\end{enumerate}
We assume for technical reasons that $A\geq A_0$ is an integer,
where $A_0\geq 2x_0$ and $Af''(A)\geq 2$ for all $A\geq A_0$.
By~\eqref{hyp:d} such an $A_0$ exists.
Let $\alpha_0,\ldots,\alpha_{T-1}\in\{0,1\}$. We define
\[
S_0(A,\alpha)=
\sum_{A<n\leq 2A}
\e\p{
\frac 12
\sum_{\ell<T}
\alpha_\ell
s\p{\floor{f(n+\ell)}}
}
.
\]
Our goal is to find a nontrivial bound for this expression. Assume that $R\geq
2$ is an integer. We apply Lemma~\ref{lem:vdc}, which is van der Corput's
inequality:
\begin{multline*}
  \abs{S_0(A,\alpha)}^2
=
  \abs{
    \sum_{A<n\leq 2A}
    \e\p{
      \frac 12\sum_{\ell<T}
      \alpha_\ell
      s\p{\floor{f(n+\ell)} }
    }
  }^2
\\
\leq
  A\frac{A+R}R
+
  2
  \frac{A+R}{R}
  \sum_{1\leq r<R}
  \p{1-\frac{r}{R}}
\\
  \times
  \mathrm{Re}
  \sum_{A<n\leq 2A-r}
  \e\p{
    \frac 12
    \sum_{\ell<T}
    \alpha_\ell
    \p{
        s\p{\floor{f(n+\ell)}}
      -
        s\p{\floor{f(n+\ell+r)}}
    }
  }.
\end{multline*}
We replace $s$ by the truncated sum-of-digits function $s_\lambda$ by means of
Lemma~\ref{lem:f_lambda}. Moreover, we replace the summation limit $2A-r$ by
$2A$ and obtain
\begin{multline}\label{eqn:vdc_applied}
  \abs{S_0(A,\alpha)}^2
\ll
  O\p{\frac{A^2}R+AL+A^2\frac{Lf'(A)}{2^\lambda}}
\\
+
  \frac AR
  \sum_{1\leq r<R}
  \p{1-\frac{r}{R}}
  \mathrm{Re}
  \sum_{A<n\leq 2A}
  \e\p{
    \frac 12
    \sum_{\ell<L}
    \beta_{\ell,r}
    s_\lambda\p{\floor{f(n+\ell)}}
  }
,
\end{multline}
where $L=T+R-1$ and $\beta_{\ell,r}$ is chosen properly: if we set
$\alpha_\ell=0$ for $\ell\not\in [0,T)$, we may choose
$\beta_{\ell,r}=\alpha_\ell-\alpha_{\ell-r}$ for $0\leq \ell<L$. In
particular, we have $\beta_{0,r}=1$ and $\sum_\ell \beta_{\ell,r}\in 2\ZZ$ for
all $r\geq 1$. We are therefore concerned with expressions of the form
\begin{equation}\label{eqn:S1_definition}
  S_1(A,\beta)
=
  \sum_{A<n\leq 2A}
  \e\p{
    \frac 12
    \sum_{\ell<L}
    \beta_\ell
    s_\lambda\p{\floor{f(n+\ell)}}
  }
,
\end{equation}
where $\beta_0=1$ and $\sum_\ell \beta_\ell\in 2\ZZ$, which we want to estimate
nontrivially. In order to do so, we dissect the interval $(A,2A]$ into smaller
pieces as follows. Let $M\geq 1$ be an integer to be chosen later and set
\begin{equation}\label{eqn:J_definition}
  J(k,L,M)
=
  \left\{
    n:\frac k{LM}\leq f'(n),f'(n+L-1)<\frac{k+1}{LM}
  \right\}
.
\end{equation}
Moreover, we set $d_0=d_0(A)=\floor{f'(A)}+1$ and $d_1=d_1(A)=\floor{f'(2A)}$.
We have $f'(A)<d_0\leq d_1\leq f'(2A)$, the second inequality being justified by the assumption $Af''(A)\geq 2$: using the Mean Value Theorem, hypothesis~\eqref{hyp:b} and this assumption, we get
$d_1-d_0\geq f'(2A)-f'(A)-1\geq Af''(A)/2-1\geq 0.$
Similarly, we obtain
\begin{equation}\label{eqn:slope}
d_1-d_0\leq A\,f''(A).
\end{equation}
By monotonicity of $f'$ the sets $J(k,L,M)$ are intervals in $\ZZ$, we may
therefore choose integers $a$ and $b$ such that $J(k,L,M)=[a,b)$. Assume that
$d_0LM\leq k<d_1LM$. Then $J(k,L,M)\subseteq (A,2A]$. Since $R\geq 2$ and
$T\geq 1$, we have $L-1\geq 1$ and therefore
$1/(LM)\geq f'(b)-f'(a)=(b-a)f''(\xi)\geq (b-a)f''(A)/2$ by (b), which implies
\begin{equation}\label{eqn:J_size}
\abs{J(k,L,M)}
\leq \frac 2{f''(A)LM}
.
\end{equation}
Clearly we have
\begin{equation}\label{eqn:disjoint_intervals}
\sum_{d_0LM\leq k<d_1LM}\abs{J(k,L,M)}\leq A,
\end{equation}
since the sets $J(k,L,M)$ are pairwisely disjoint and contained in $(A,2A]$.
Moreover, they almost cover the interval $(A,2A]$: we have
\[
\sum_{d_0LM\leq k<d_1LM}\abs{J(k,L,M)}\geq A-O\p{\frac 1{f''(A)}+A\,f''(A)L^2M}
,
\]
where the first error term takes care of the integers $n$ such that
$f'(A)<f'(n)\leq d_0$ or $d_1<f'(n)\leq f'(2A)$,
which is again an application of the Mean Value Theorem,
and the second error term covers the integers $n$ that are excluded by the
second condition in~\eqref{eqn:J_definition} --- the length of the summation
over $k$ is estimated via~\eqref{eqn:slope}, and for each $k$ we take out at
most $L$ integers. We obtain
\begin{multline}\label{eqn:S1}
  S_1(A,\beta)
=
  \sum_{d_0LM\leq k<d_1LM}
  \sum_{m<M}
  \sum_{\substack{n\in J(k,L,M)\\\frac mM\leq \{f(n)\}<\frac{m+1}M}}
  \e\p{
    \frac 12
    \sum_{\ell < L}
    \beta_\ell
    s_\lambda\p{\floor{f(n+\ell)}}
  }
\\
+
  O\p{
      \frac 1{f''(A)}
    +
      A\,f''(A)L^2M
  }
.
\end{multline}
We define
\begin{equation}\label{eqn:dfn_S2}
  S_2(k,m,L,M,\beta)
=
  \sum_{\substack{n\in J(k,L,M)\\\frac mM\leq \{f(n)\}<\frac{m+1}M}}
  \e\p{
    \frac 12
    \sum_{\ell< L}
    \beta_\ell
    s_\lambda\p{\floor{f(n+\ell)}}
  }
,
\end{equation}
for which we have to find an estimate. We define a set of ``good'' $m$ by
\[
  G(k,L,M)
=
  \left\{
    m<M:
    \left[
      \frac mM+\ell \frac{k}{LM},\frac{m+1}M+\ell\frac{k+1}{LM}
    \right)
    \cap\ZZ
    =
    \emptyset
    \textrm{ for }0\leq \ell<L
  \right\}
.
\]
We claim that
\begin{equation}\label{eqn:excluded_m}
\abs{G(k,L,M)}\geq M-O(L)
.
\end{equation}
Indeed, the intervals in the definition of $G(k,L,M)$ have length $\leq 2/M$,
therefore for each $\ell$ we have to exclude only $O(1)$ integers $m<M$.

For $m\not\in G(k,L,M)$ we estimate $S_2$ trivially. To this end, we use
Lemma~\ref{lem:f_discrepancy} with $K=1$ in order to count the number of
summands. We obtain with the help of (\ref{eqn:J_size})
\begin{multline}\label{eqn:slice_estimate}
\abs{
  \left\{
    n\in J(k,L,M):\frac mM\leq \{f(n)\}<\frac{m+1}M
  \right\}
}
\\
\ll
  \frac{\abs{J(k,L,M)}}{M}
+
  \p{f''(A)LM}^{-1/2}
.
\end{multline}
Combining~\eqref{eqn:slope}, (\ref{eqn:disjoint_intervals}), (\ref{eqn:S1}),
(\ref{eqn:dfn_S2}), (\ref{eqn:excluded_m}) and~\eqref{eqn:slice_estimate} gives
\begin{multline}\label{eqn:S1_continued}
  S_1(A,\beta)
=
  \sum_{d_0LM\leq k<d_1LM}
  \sum_{m\in G(k,L,M)}
  S_2(k,m,L,M,\beta)
\\
+
  O\p{
      A\frac LM
    +
      A f''(A)^{1/2} L^{3/2}M^{1/2}
    +
      \frac 1{f''(A)}
    +
      A\,f''(A)L^2M
  }
.
\end{multline}
Next we want to remove the function $f$ occurring in the sum $S_2$. In order
to do so, we will use the argument that the values of $\floor{f(n)}$ are
approximately uniformly distributed in residue classes modulo $2^\lambda$,
where $n$ satisfies the restrictions under the sum in the definition of $S_2$.
If $n\in J(k,L,M)$ is such that $m/M\leq \{f(n)\}<(m+1)/M$, then
\[
  \floor{f(n)}+\frac mM+\ell \frac k{LM}
\leq
  f(n+\ell)
<
  \floor{f(n)}+\frac{m+1}M+\ell \frac{k+1}{LM}
\]
for all $\ell<L$, which follows easily from the definition of $J(k,L,M)$ and
the Mean Value Theorem. Consequently, if $m\in G(k,L,M)$, then
\[
  \floor{f(n+\ell)}
=
  \floor{f(n)}
+
  \floor{\frac mM+\ell \frac k{LM}}
=
  \floor{f(n)}
+
  \ell\floor{\frac{k}{LM}}
+
  i_\ell^{k,m},
\]
where the correction terms $i^{k,m}_\ell$ satisfy $i^{k,m}_0=0$ and
$i_{\ell+1}^{k,m}-i_\ell^{k,m}\in \{0,1\}$. For any $m\in G(k,L,M)$ we obtain
therefore
\begin{multline}\label{eqn:S2_decomposition}
  S_2(k,m,L,M,\beta)
\\
=
  \sum_{\substack{n\in J(k,L,M)\\
  \frac mM\leq \{f(n)\}<\frac{m+1}M}}
  \e\p{
    \frac 12
    \sum_{\ell<L}
    \beta_\ell
    s_\lambda\p{
      \floor{f(n)}+\ell\floor{\frac{k}{LM}}+i^{k,m}_\ell
    }
  }
\\
=
  \sum_{s<2^\lambda}
  \sum_{\substack{n\in J(k,L,M)\\
  \frac mM\leq \{f(n)\}<\frac{m+1}M\\
  \floor{f(n)}\equiv s\bmod 2^\lambda}}
  \e\p{\frac 12
  \sum_{\ell<L}
  \beta_\ell
  s_\lambda\p{
    s+\ell\floor{\frac{k}{LM}}+i^{k,m}_\ell
  }
}
.
\end{multline}
The summand does not depend any more on $n$, so that we only have to count the
number of times the three conditions under the second summation sign are
satisfied. For this purpose we use Lemma~\ref{lem:f_discrepancy} again, this
time taking $K=2^\lambda$. We obtain for $m\in G(k,L,M)$
\begin{multline}\label{eqn:S2_beatty}
S_2(k,m,L,M,\beta)
=
O\p{
    \frac{\abs{J(k,L,M)}}{LM}
  +
    2^\lambda
    \p{f''(A)LM}^{-1/2}
}
\\
+
  \frac{\abs{J(k,L,M)}}M
  \frac 1{2^\lambda}
  \sum_{s<2^\lambda}
  \e\p{
    \frac 12
    \sum_{\ell<L}
    \beta_\ell
    s_\lambda\p{
      s+\ell\floor{\frac{k}{LM}}+i^{k,m}_\ell
    }
  }
.
\end{multline}
This process is valid for each $k$ and $m\in G(k,L,M)$. In order to sum this
expression over $k$ and $m$, which is needed in order to return to the sum
$S_1$, we want to ``forget'' the upper indices in $i^{k,m}_\ell$. We consider
therefore the sum
\begin{multline}\label{eqn:S3}
  S_3(A,L,M,i,\beta,\lambda)
\\
=
  \sum_{d_0LM\leq k<d_1LM}
  \abs{
    \frac 1{2^\lambda}
    \sum_{s<2^\lambda}
    \e\p{
      \frac 12
      \sum_{\ell<L}
      \beta_\ell
      s_\lambda\p{
      s+\ell\floor{\frac{k}{LM}}+i_\ell
      }
    }
  }
.
\end{multline}
(Note that $d_0$ and $d_1$ are functions of $A$.) We have
\[
  S_3(A,L,M,i,\beta,\lambda)
=
  \sum_{d_0LM\leq k<d_1LM}
  \abs{
    G_\lambda^{i,\beta}\p{0,\floor{\frac{k}{LM} } }
  }
\leq
  LM
  \sum_{0\leq d<2^{\lambda'}}
  \abs{G_\lambda^{i,\beta}(0,d)}
,
\]
where $\lambda'$ is chosen in such a way that
$2^{\lambda'-1}<d_1\leq 2^{\lambda'}$. We note that $f'(A)\asymp 2^{\lambda'}$
by~\eqref{eqn:df_quotient}, therefore we obtain by Lemma~\ref{lem:DMR} and the
Cauchy--Schwarz inequality
\begin{equation}\label{eqn:S3_estimate}
  S_3(A,L,M,i,\beta,\lambda)
\leq
  c_1
  M
  f'(A)^{1-\eta}
\end{equation}
for all $\lambda$ satisfying the restriction
$2^{\lambda/2}\leq \floor{f'(2A)}\leq 2^\lambda$, where $c_1$ and $\eta>0$
depend only on $L$.

We take the sum of~\eqref{eqn:S2_beatty} over $k$,
comprising $(d_1-d_0)LM\leq A\,f''(A)LM$ summands,
and over $m$, comprising $\abs{G(k,L,M)}\leq M$ summands,
and use the estimates $\abs{J(k,L,M)}\leq 2/(f''(A)LM)$
and~\eqref{eqn:S3_estimate}, which yields
\begin{multline*}
  \sum_{d_0LM\leq k<d_1LM}
  \sum_{m\in G(k,L,M)}
  S_2(k,m,L,M,\beta)
\ll
  \frac 1{f''(A)LM}
\\
  \times
  \sum_{d_0LM\leq k<d_1LM}
  \sup_{\substack{i_0,\ldots,i_{L-1}\\i_0=0\\i_{\ell+1}-i_\ell\in\{0,1\}} }
  \abs{
    \frac 1{2^\lambda}
    \sum_{s<2^\lambda}
    \e\p{
      \frac 12
      \sum_{\ell<L}
      \beta_\ell
      s_\lambda\p{
      s+\ell\floor{\frac{k}{LM}}+i_\ell
      }
    }
  }
\\
+
  \frac AL+Af''(A)^{1/2}2^\lambda L^{1/2}M^{3/2}
  \ll
  c_2 \frac{f'(A)^{1-\eta}}{f''(A)}
+
  \frac AL
+
  Af''(A)^{1/2}2^\lambda L^{1/2}M^{3/2}
,
\end{multline*}
where $c_2$ and $\eta$ depend only on $L$.
Combining this with~\eqref{eqn:S1_continued}, we get
\begin{equation*}
  S_1(A,\beta)
\ll
  c_2
  \frac{f'(A)^{1-\eta}}{f''(A)}
+
  \frac AL
+
  A\frac LM
+
  Af''(A)^{1/2}
  2^\lambda L^{3/2}M^{3/2}
+
  \frac 1{f''(A)}
+
  A\,f''(A)L^2M
.
\end{equation*}
Finally, we have to take $R$ into account and treat the sum $S_0$.
Using~\eqref{eqn:vdc_applied}, we obtain uniformly for $A,R,M,\lambda$ such
that $A\geq A_0$, $R\geq 2$, $M\geq 2$ and
$2^{\lambda/2}\leq \floor{f'(2A)}\leq 2^\lambda$ (this latter condition is
needed for~\eqref{eqn:S3_estimate})
\begin{multline}\label{eqn:cumulative_error}
  \abs{S_0(A,\alpha)}^2
\ll
  \frac {A^2}R+AL+A^2\frac{Lf'(A)}{2^\lambda}
+
  A \sup_{\beta} \abs{S_1(A,\beta)}
\\
\ll
  \frac {A^2}R
+
  AL
+
  A^2\frac{Lf'(A)}{2^\lambda}
+
  c_2(L)
  A
  \frac{f'(A)^{1-\eta(L)}}{f''(A)}
+
  A^2\frac LM
\\
+
  A^2f''(A)^{1/2}
  2^\lambda L^{3/2}M^{3/2}
+
  \frac A{f''(A)}
+
  A^2f''(A)L^2M
,
\end{multline}
where $L=R+T-1$. We have to choose $R,M$ and $\lambda$. Assume that
$0<\varepsilon<1$ and let $R\geq 2$ be the minimal integer such that $1/R<\varepsilon$.
Moreover, let $M=RL$. Then obviously $L/M<\varepsilon$.
By~\eqref{hyp:d} we have $Af''(A)\ra\infty$,
which implies that $A/f''(A)=o(A^2)$, moreover
we have $f'(A)\ra\infty$, as we noted in the proof of Lemma~\ref{lem:f_lambda}.
This fact, using~\eqref{eqn:df_d2f_2}, implies $f'(A)\leq 3Af''(A)\log A$ for
$A$ large enough. Using also~\eqref{hyp:d}, we get for all $k$
\begin{multline*}
  \frac{f'(A)^{1-\eta}}{f''(A)}
\ll
  \frac{A^{1-\eta}f''(A)^{1-\eta}(\log A)^{1-\eta}}{f''(A)}
\ll
  A^{1-\eta}f''(A)^{-\eta}(\log A)^{1-\eta}
\\
\ll
  (C_k)^{-\eta}A \log^{1-\eta (k+1)}A
.
\end{multline*}
For $k$ large enough this is $o(A)$.
For given $A$ choose $\lambda=\lambda(A)$ minimal such that
$Lf'(2A)/2^\lambda\leq \varepsilon$. Then for large $A$ the condition
$2^{\lambda/2}\leq \floor{f'(2A)}\leq 2^\lambda$, which we need
for~\eqref{eqn:cumulative_error}, is satisfied. Using the definition of
$\lambda$, the estimate~\eqref{eqn:df_d2f_2} and hypothesis~\eqref{hyp:c} (in
this order), we get
\[
  f''(A)^{1/2}2^\lambda
\ll
  f''(A)^{1/2}\varepsilon^{-1} Lf'(A)
\ll
  \varepsilon^{-1} L f''(A)^{3/2}A\log A
\ll
  C^{3/2}
  \varepsilon^{-1} L \log^{-1/2}A
\]
for $A$ large enough, which also tends to $0$ as $A\ra\infty$. Finally, we
note that $f''(A)\ra 0$ by~\eqref{hyp:c}. These observations are sufficient to
prove that each of the error terms in~\eqref{eqn:cumulative_error} is bounded
by $\varepsilon A^2$ if $A$ is large enough. In particular, this proves
Proposition~\ref{prp:1} and therefore the theorem.

\begin{remark}
As we announced in the introduction, we proved the normality of more general
subsequences of $\mathbf t$ than those indexed by $\floor{n^c}$. For a given
function $f$, we only have to find an $x_0$ such that the
hypotheses~\eqref{hyp:a} to~\eqref{hyp:d} are satisfied. Not only can we
handle obvious variations such as $n\mapsto\floor{n^c(\log n)^\eta}$ or
$n\mapsto \floor{n^{c_1}+n^{c_2}}$, we can also take functions such as
$n\mapsto\floor{n\exp{\log^{1-\varepsilon} n}}$, where $0<\varepsilon<1$, or
$n\mapsto\floor{n \exp((\log\log n)^{1+\varepsilon})}$, growing more slowly
than $n^{1+\varepsilon}$ for all $\varepsilon>0$. On the other hand, it has
been shown by Deshouillers, Drmota and Morgenbesser~\cite{DDM2012} that the
asymptotic densities of $0$ and $1$ in the subsequence of $\mathbf t$ indexed
by $\floor{n\log n}$ do not exist, in particular we do not obtain a normal
sequence in this way. It would therefore be interesting to locate more
precisely the rate of growth where the ``phase transition'' takes place. For
example, the question whether the sequence
$n\mapsto \mathbf t(\lfloor n \log^k n\rfloor)$ is normal for some $k$ remains
open at the moment.
\end{remark}

\section*{Funding}
This work was supported by the Austrian Science Fund (FWF) [project F5502-N26,
which is a part of the Special Research Program
``Quasi Monte Carlo Methods: Theory and Applications''].

\bibliographystyle{siam}
\bibliography{normality}

\begin{thebibliography}{10}

\bibitem{AS99}
{\sc J.-P. Allouche and J.~Shallit}, {\em The ubiquitous
  {P}rouhet-{T}hue-{M}orse sequence}, in Sequences and their applications
  ({S}ingapore, 1998), Springer Ser. Discrete Math. Theor. Comput. Sci.,
  Springer, London, 1999, pp.~1--16.

\bibitem{AS2003}
\leavevmode\vrule height 2pt depth -1.6pt width 23pt, {\em Automatic
  sequences}, Cambridge University Press, Cambridge, 2003.
\newblock Theory, applications, generalizations.

\bibitem{DDM2012}
{\sc J.-M. Deshouillers, M.~Drmota, and J.~F. Morgenbesser}, {\em Subsequences
  of automatic sequences indexed by {$\lfloor n^c\rfloor$} and correlations},
  J. Number Theory, 132 (2012), pp.~1837--1866.

\bibitem{DMR2014}
{\sc M.~Drmota, C.~Mauduit, and J.~Rivat}, {\em The {T}hue-{M}orse sequence
  along squares is normal}.
\newblock Manuscript, available on \url{http://dmg.tuwien.ac.at/drmota/}.

\bibitem{GK91}
{\sc S.~W. Graham and G.~Kolesnik}, {\em van der {C}orput's method of
  exponential sums}, vol.~126 of London Mathematical Society Lecture Note
  Series, Cambridge University Press, Cambridge, 1991.

\bibitem{M2001}
{\sc C.~Mauduit}, {\em Multiplicative properties of the {T}hue-{M}orse
  sequence}, Period. Math. Hungar., 43 (2001), pp.~137--153.

\bibitem{MR95}
{\sc C.~Mauduit and J.~Rivat}, {\em R\'epartition des fonctions
  {$q$}-multiplicatives dans la suite {$([n^c])_{n\in\bold N},\ c>1$}}, Acta
  Arith., 71 (1995), pp.~171--179.

\bibitem{MR2005}
\leavevmode\vrule height 2pt depth -1.6pt width 23pt, {\em Propri\'et\'es
  {$q$}-multiplicatives de la suite {$\lfloor n^c\rfloor$}, {$c>1$}}, Acta
  Arith., 118 (2005), pp.~187--203.

\bibitem{MR2009}
\leavevmode\vrule height 2pt depth -1.6pt width 23pt, {\em La somme des
  chiffres des carr\'es}, Acta Math., 203 (2009), pp.~107--148.

\bibitem{MR2010}
\leavevmode\vrule height 2pt depth -1.6pt width 23pt, {\em Sur un probl\`eme de
  {G}elfond: la somme des chiffres des nombres premiers}, Ann. of Math. (2),
  171 (2010), pp.~1591--1646.

\bibitem{S2014}
{\sc L.~Spiegelhofer}, {\em Piatetski-{S}hapiro sequences via {B}eatty
  sequences}, Acta Arith., 166 (2014), pp.~201--229.

\end{thebibliography}

\end{document}